\documentclass[reqno]{amsart}

\usepackage[T1]{fontenc}
\usepackage{lmodern}
\usepackage{amssymb,mathtools}
\usepackage{microtype}
\usepackage{aliascnt}

\usepackage[colorlinks=true,linkcolor=blue,citecolor=blue,urlcolor=blue]{hyperref}
\usepackage[nameinlink,noabbrev]{cleveref}

\mathtoolsset{centercolon,showonlyrefs}

\theoremstyle{plain}
\newtheorem{theorem}{Theorem}

\newaliascnt{proposition}{theorem}

\aliascntresetthe{proposition}

\newaliascnt{lemma}{theorem}
\newtheorem{lemma}[lemma]{Lemma}
\aliascntresetthe{lemma}

\newaliascnt{corollary}{theorem}
\newtheorem{corollary}[corollary]{Corollary}
\aliascntresetthe{corollary}

\theoremstyle{definition}
\newaliascnt{definition}{theorem}
\newtheorem{definition}[definition]{Definition}
\aliascntresetthe{definition}

\newaliascnt{example}{theorem}
\newtheorem{example}[example]{Example}
\aliascntresetthe{example}

\theoremstyle{remark}
\newaliascnt{remark}{theorem}
\newtheorem{remark}[remark]{Remark}
\aliascntresetthe{remark}

\crefname{theorem}{Theorem}{Theorems}
\Crefname{theorem}{Theorem}{Theorems}
\crefname{proposition}{Proposition}{Propositions}
\Crefname{proposition}{Proposition}{Propositions}
\crefname{lemma}{Lemma}{Lemmas}
\Crefname{lemma}{Lemma}{Lemmas}
\crefname{corollary}{Corollary}{Corollaries}
\Crefname{corollary}{Corollary}{Corollaries}
\crefname{definition}{Definition}{Definitions}
\Crefname{definition}{Definition}{Definitions}
\crefname{example}{Example}{Examples}
\Crefname{example}{Example}{Examples}
\crefname{remark}{Remark}{Remarks}
\Crefname{remark}{Remark}{Remarks}

\crefname{appendix}{Appendix}{Appendices}
\Crefname{appendix}{Appendix}{Appendices}

\newcommand{\R}{\mathbb{R}}
\newcommand{\df}{\mathrm{d}}
\newcommand{\dd}{\mathop{}\!\df}

\DeclarePairedDelimiter\abs{\lvert}{\rvert}

\newcommand{\W}{\mathcal{W}}
\newcommand{\A}{\mathcal{A}}

\newcommand{\HIp}{{H_{\mathrm{I}}^{+}}}
\newcommand{\HIIp}{{H_{\mathrm{II}}^{+}}}
\newcommand{\HIIIA}{{H_{\mathrm{III}}^{A}}}

\title{The Independence-Preserving Property and Planar Web Geometry}
\author{Yoshihiro Gyotoku}
\address{Graduate School of Mathematical Sciences, The University of Tokyo,
3-8-1 Komaba, Meguro-ku, Tokyo 153-8914, Japan}
\email{gyotoku@ms.u-tokyo.ac.jp}
\email{yoshihirogyotoku0131@gmail.com}
\subjclass[2020]{Primary 60E05; Secondary 53A60, 39B22, 62E10}
\keywords{independence-preserving property, quadrirational Yang--Baxter map, planar web, Abelian relation, characterization of probability laws}
\thanks{
This work was supported by JSPS KAKENHI Grant Nos.~24KJ0933 and 24K21515, by the FoPM WINGS Program at the University of Tokyo, and in part by the National Science Centre, Poland, under Project
No.~2023/51/B/ST1/01535.
}

\begin{document}

\begin{abstract}
This paper introduces planar web geometry into the study of characterization problems for probability laws arising from independence preservation.

Let $F(x,y) = (u, v)$ be a real-analytic diffeomorphism defined on an open domain in $\R^2$.
Let $X$ and $Y$ be independent random variables such that $(X, Y)$ takes values in the domain of $F$, and let $\mu_X$ and $\mu_Y$ be their laws.
Set $(U, V) = F(X, Y)$ and denote the laws of $U$ and $V$ by $\mu_U$ and $\mu_V$.
The map $F$ is said to preserve the independence of $(\mu_X, \mu_Y, \mu_U, \mu_V)$ if $\mu_U\otimes\mu_V = F_\#(\mu_X\otimes\mu_Y)$, which is equivalent to the independence of $U$ and $V$.
This condition is highly restrictive and can therefore be used to characterize the laws for which it holds.

For measures with positive continuous densities, the map preserves independence for such product measures if and only if their logarithmic densities solve the corresponding inhomogeneous Abelian functional equation.
The solution space of logarithmic densities is identified with an affine space modeled on the space of Abelian relations, whose dimension is at most three according to Bol's rank bound under non-singularity assumptions of the associated web.

This framework provides unified proofs of characterization theorems, including classical results such as the Kac–Bernstein and Lukacs theorems and the Matsumoto–Yor property, as well as recent results for the quadrirational Yang–Baxter maps $\HIp$, $\HIIp$, and $\HIIIA$, under the stated regularity assumptions.
The same method also produces further examples, including a class generalizing transformations studied by Koudou and Vallois; every map in this class admits at least a two-parameter family of preserved product measures.

The independence-preserving property of a map is preserved by coordinatewise one-variable reparametrizations, which also preserve the associated web.
Thus the web associated with a map is a natural invariant under the corresponding equivalence relation of independence-preserving maps.
Together with the local algebraizability of nonsingular planar $4$-webs of maximal rank, this correspondence suggests a route toward a complete classification of maps preserving the independence of a three-parameter family of measures.
\end{abstract}

\maketitle

\section{Introduction}
\label{sec:introduction}

Characterizations of probability laws by independence preservation
properties form a classical subject.
Let $X$ and $Y$ be independent random variables.
The Kac--Bernstein theorem
\cite{Kac1939,Bernstein1941} characterizes Gaussian laws
by the independence of $X+Y$ and $X-Y$.
The Lukacs theorem \cite{Lukacs1955} characterizes gamma laws
by the independence of $X+Y$ and $X/Y$ for positive $X$ and $Y$.
The Matsumoto--Yor property \cite{MatsumotoYor2001} concerns an
independence relation involving the generalized inverse Gaussian
(GIG) and gamma laws, and a corresponding converse characterization was established by Letac and Weso\l{}owski
\cite{LetacWesolowski2000}.
Further independence properties of Matsumoto--Yor type were studied
by Koudou and Vallois \cite{KoudouVallois2012}; see also
\cite{KoudouLey2014}.
Such problems are called independence-characterization problems.

Motivated by studies of invariant measures of discrete integrable systems,
Sasada and Uozumi \cite{SasadaUozumi2024} showed that quadrirational Yang--Baxter maps $\HIp$, $\HIIp$ and $\HIIIA$ from \cite{PSTV2010} preserve independence of certain probability measures on $(0,\infty)^2$.
The corresponding laws were subsequently characterized: the generalized beta-prime family associated with
$\HIp$ in \cite{KLPW2025}, the Kummer family associated with
$\HIIp$ in \cite{KoudouWesolowski2025}, and the GIG family
associated with $\HIIIA$ in \cite{LetacWesolowski2024}.
These characterizations were proved separately by arguments
adapted to the individual transformations.

The present paper introduces the theory of planar web geometry and Abelian relations \cite{BlaschkeBol1938,ChernGriffiths1978,Chern1982,AkivisGoldberg2000} to this circle of characterization problems, uncovers a common underlying structure and provides unified proofs under additional regularity assumptions; see \cref{cor:HI-densities,cor:HII-densities,cor:HIII-densities,rem:classical-properties}.

For each $T\in\{X, Y, U, V\}$, let $I_T$ be an open interval of $\R$.
Let $F(x,y)=(u,v)$ be a real-analytic diffeomorphism from $I_X\times I_Y$ to $I_U\times I_V$.
Let $X$ and $Y$ be independent random variables taking values in $I_X$ and $I_Y$ respectively, and let $(U, V) = F(X, Y)$.
The map $F$ is said to preserve the independence of $X$ and $Y$ if $U$ and $V$ are independent.
This happens if and only if
\[
\mu_U \otimes \mu_V = F_\#(\mu_X \otimes \mu_Y),
\]
where $\mu_T$ denotes the probability distribution of $T$.
Then $F$ is also said to preserve the independence of $(\mu_X, \mu_Y, \mu_U, \mu_V)$.
This terminology is extended to general measures by the same identity.

For each $T\in\{X, Y, U, V\}$, let $\mu^0_T$ and $\mu_T$ be $\sigma$-finite Borel measures on $I_T$, both equivalent to Lebesgue measure. 
Assume that $\dd\mu_T/\!\dd\mu^0_T$ admits a positive continuous representative, and let $\varphi_T = \log(\dd\mu_T/\!\dd\mu^0_T)$.
Let
\[
\W_F=(x,y,u(x, y),v(x, y))
\]
be the planar $4$-web associated with $F$ and assume it is nonsingular.
Suppose that $F$ preserves the independence of $(\mu^0_X, \mu^0_Y, \mu^0_U, \mu^0_V)$.
\Cref{thm:characterization}, the main theorem, states that $F$ preserves the independence of $(\mu_X, \mu_Y, \mu_U, \mu_V)$
if and only if
\[
(\varphi_X,\varphi_Y,-\varphi_U,-\varphi_V)
\in\A(\W_F),
\]
where $\A(\W_F)$ is the vector space of Abelian relations for $\W_F$.
Thus the equation governing the logarithmic density-ratio $\varphi_T$ is precisely the Abelian functional equation associated with $\W_F$.
Consequently, $\dim\A(\W_F)$ is the number of free parameters of the family of measures modulo componentwise positive rescaling within the stated regularity class for which $F$ preserves independence.

For a nonsingular real-analytic web, measurable Abelian relations have
real-analytic representatives \cite{Pirio2006,PirioThesis}, and Bol's theorem gives
\[
\dim\A(\W_F)\leq 3.
\]
In particular, three linearly independent Abelian relations form a basis of $\A(\W_F)$ and determine the family of measures satisfying regularity assumptions whose independence is preserved by $F$.
This framework indeed recovers the known results, including classical results associated with the Kac--Bernstein and Lukacs theorems and the Matsumoto--Yor property and recent ones for quadrirational maps.

The use of web geometry is not limited to recovering known examples.
\Cref{ex:rank-two-class} shows maps of the form $F(x, y) = (\phi(x+y), \psi(x)-\phi(x+y))$, which generalizes transformations considered by Koudou and Vallois \cite{KoudouVallois2012}, exhibit at least
a two-parameter family of preserved product measures.
Also, in \cref{ex:HI-on-other-domains} the map $\HIp$ restricted to other domains are considered, with characterizations of three-parameters family of preserved product measures.
These examples show both how further examples can be constructed and how
their independence properties can be proved without a separate
functional-equation argument for each transformation.

There is also a structural reason for attaching a web to the independence-preservation problem.
Separate one-variable reparametrizations of $x,y,u$ and $v$ by diffeomorphisms $a,b,c$ and $d$ applied to each varibale preserve the independence-preserving property of a map.
For the associated web, the same operation replaces $(x,y,u,v)$ by $(a(x),b(y),c(u(x, y)),d(v(x, y)))$. 
Since composing a function with a one-variable diffeomorphism does not change its level curve, the associated web remain unchanged. 
Hence the associated web for a map is a natural geometric invariant under these coordinatewise changes of variables.
See \cref{rem:coordinate-equivalence} for details.

The maximal-rank case has an additional classical property:
every nonsingular planar $4$-web of maximal rank is locally
algebraizable
\cite{BlaschkeBol1938,ChernGriffiths1978,Chern1982}.
Together with the preceding correspondence, this makes it natural to seek a complete classification of real-analytic diffeomorphisms that preserve independence of measures in a three-parameter family.


\section{A web-geometric framework}\label{sec:framework}

\subsection{Planar webs and Abelian relations}\label{sec:web-input}

\begin{definition}\label{def:web}
    Let $d$ be a positive integer and $\Omega\subset\R^2$ be an open connected set.
    A \emph{nonsingular planar $d$-web} on $\Omega$ is a $d$-tuple $\W=(h_i)_{i=1}^d$ of real-analytic functions such that $\dd h_i\neq 0$ and $\dd h_i\wedge\dd h_j\neq 0$ everywhere on $\Omega$ whenever $i\neq j$.
    \footnote{
    Geometrically, each function \(h_i\) represents the family of its level curves $\mathcal{F}(h_i) = \{\{(x,y)\in\Omega \mid h_i(x,y)=t\} \mid t\in h_i(\Omega) \}$. 
    Since for a one-variable diffeomorphism $\rho_i$ one has $\mathcal{F}(h_i) = \mathcal{F}(\rho_i\circ h_i)$, the web $(\rho_i\circ h_i)_{i=1}^d$ is identified with $(h_i)_{i=1}^d$.
    }

    A \emph{measurable Abelian relation} for $\W$ is a $d$-tuples $(\Phi_i)_{i=1}^d$ of real-valued measurable functions $\Phi_i$ on $h_i(\Omega)$ satisfying
    \begin{align}\label{eq:Afe}
        \Phi_1(h_1(x, y)) + \cdots + \Phi_d(h_d(x, y)) = (\text{constant})
    \end{align}
    for every $(x, y)\in\Omega$.
    This equation is called an \emph{Abelian functional equation}.
    For differentiable components, this is equivalent to
    \[
        \Phi_1'(h_1(x, y)) \dd h_1 + \cdots + \Phi_d'(h_d(x, y)) \dd h_d = 0.
    \]
    Continuous and real-analytic Abelian relations are defined analogously.
    Two measurable Abelian relations are identified if their corresponding components differ by additive constants.
    The real vector space of real-analytic Abelian relations modulo this equivalence is denoted by $\A(\W)$, and its dimension is called the rank of $\W$.
\end{definition}

The following regularity result is \cite[Proposition~1.2.1]{PirioThesis} applied to the connected set $\Omega$ under the assumption that $\W$ is nonsingular everywhere on $\Omega$.
See also \cite[Theorem~A]{Pirio2006}, which treats the case where all $h_i$ are rational.
\begin{lemma}\label{lem:pirio}
    Let $\W$ be a nonsingular planar web on $\Omega$.
    Every measurable Abelian relation for $\W$ has a real-analytic representative.
\end{lemma}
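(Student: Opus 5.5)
Since the statement is quoted from Pirio's work, the plan is to reproduce the standard regularization argument in three stages: measurable to continuous, continuous to smooth, smooth to analytic. Throughout, the nonsingularity of $\W$ is used in one way. Near any point of $\Omega$ we may take two of the foliations, say $h_1,h_2$, as local coordinates, because $\dd h_1\wedge\dd h_2\neq0$. Relative to these coordinates, every other $h_k$ depends genuinely on both variables with a nonvanishing Jacobian. Regularity is a local question, and $\Omega$ is connected, so it suffices to work in small boxes and then patch.

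\textbf{Step 1: from measurable to continuous.} Fix an index $i$ and a point $p\in\Omega$. Choose local coordinates $(s,t)=(h_i,h_j)$ near $p$ for some $j\neq i$, so that \eqref{eq:Afe} reads
\[
\Phi_i(s)=C-\Phi_j(t)-\sum_{k\neq i,j}\Phi_k(h_k(s,t)).
\]
Pick a bounded measurable cutoff $\chi$ that is supported in a small $t$-interval and has $\int\chi=1$. Multiply the identity by $\chi(t)$ and integrate in $t$. The term $\Phi_j$ contributes a constant. Each remaining term is treated by changing variables in the $t$-integral from $t$ to $w=h_k(s,t)$; this is allowed because $\partial_t h_k\neq0$, by nonsingularity. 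The term becomes
\[
\int \Phi_k(w)\,K_k(s,w)\,\dd w,
\]
with a kernel $K_k$ that is continuous in $s$. Lusin's theorem, or restriction to a set where all the $\Phi_k$ are bounded, makes these integrals finite, after first averaging over a positive-measure set of parameters. This expresses $\Phi_i$ almost everywhere as a continuous function. After modifying on null sets we obtain continuous representatives. This is the classical Steinhaus/Járai-type smoothing for functional equations.

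\textbf{Step 2: bootstrap to $C^\infty$.} The same integral formula shows that if every $\Phi_k$ is $C^m$, then $\Phi_i$ is $C^{m+1}$: one can differentiate under the integral in $s$, and the kernel can be chosen smooth once $\chi$ is smooth. Induction then gives $C^\infty$. This is the regularity-improvement step of Járai's theory.

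\textbf{Step 3: analyticity.} Now the $\Phi_k$ are smooth and the relation can be differentiated. By the Abel/Bol theory, the derivatives $\Phi_k'(h_k)$ satisfy a linear system of differential equations with real-analytic coefficients; this is the Blaschke--Bol prolongation, which closes up after finitely many differentiations. The resulting space of solutions is finite-dimensional. By Cauchy--Kovalevskaya, or simply by uniqueness and analyticity of solutions of linear ODE with analytic coefficients along curves, each solution is real-analytic. Concretely, restrict to a transversal curve, for example a leaf of $h_j$, parametrized analytically by $s$. Then $(\Phi_k^{(r)}(h_k))_{k,r\le N}$ solves an analytic linear ODE in $s$, so each $\Phi_k$ is analytic near the relevant points. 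Finally, the local analytic representatives agree up to constants on overlaps. The domains $h_i(\Omega)$ are intervals, since $\Omega$ is connected, so they glue into global real-analytic functions on $h_i(\Omega)$.

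The main obstacle I expect is Step 1: producing continuity from mere measurability without a priori local integrability. I would handle it as in Járai's theorem. First pass to a compact set of positive measure on which all the $\Phi_k$ are continuous (Lusin). Then use the nonsingular change of variables to show that the convolution-type averages are well defined and continuous.
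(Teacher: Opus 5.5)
The paper gives no argument for this lemma; it imports it from \cite[Proposition~1.2.1]{PirioThesis}. Your proposal instead reconstructs the standard route: J\'arai-type regularity (measurable, then continuous, then $C^\infty$), followed by the Abel--Bol prolongation for analyticity. In outline this is correct. Compared with the bare citation, it shows explicitly where nonsingularity is used. In Steps 1 and 2 it is used through $\partial_s h_k\neq0$ and $\partial_t h_k\neq0$ in the coordinates $(s,t)=(h_i,h_j)$. In Step 3, the pairwise distinct slopes of the $\dd h_k$ make the prolonged system close, via a Vandermonde determinant. That closure is the same computation that proves \cref{lem:bol}, so your Step 3 gives analyticity and Bol's bound at once. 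The citation buys brevity.

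Three places need tightening.

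\textbf{Step 1.} A fixed cutoff cannot work as written. For merely measurable $\Phi_k$, the integral $\int\Phi_k(h_k(s,t))\chi(t)\,\dd t$ may diverge. Nor can a Lusin set for $\Phi_k$ be chosen to contain $h_k(s,\cdot)$ of the support of $\chi$ for all $s$ at once. The J\'arai argument you allude to is pointwise instead:
\begin{itemize}
\item Work on a closed box $\bar S\times\bar T$ in the $(s,t)$-coordinates. Let $c>0$ be a lower bound for $\abs{\partial_t h_k}$ and $L$ an upper bound for $\abs{\partial_s h_k}$ there.
\item Choose Lusin compacts $C_k\subset W_k:=h_k(\bar S\times\bar T)$ with $\abs{W_k\setminus C_k}<c\abs{T}/(2d)$.
\item By the change of variables $w=h_k(s,t)$, for any $s_0,s\in S$ the bad set of $t$ has measure less than $\abs{T}$. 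Hence there is a common $t\in T$ with $h_k(s,t),h_k(s_0,t)\in C_k$ for all $k\neq i,j$.
\item Subtract \eqref{eq:Afe} at $(s_0,t)$ from \eqref{eq:Afe} at $(s,t)$. The term $\Phi_j(t)$ cancels, and you get $\abs{\Phi_i(s)-\Phi_i(s_0)}\le\sum_{k\neq i,j}\omega_k(L\abs{s-s_0})$, where $\omega_k$ is the modulus of continuity of $\Phi_k|_{C_k}$.
\end{itemize}
Because \eqref{eq:Afe} holds at every point, this shows that $\Phi_i$ itself is continuous. No modification on null sets is needed.

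\textbf{Step 2.} Once the $\Phi_k$ are continuous, your kernel formula with smooth $\chi$ already gives $C^\infty$ in one step, because every $s$-derivative falls on $K_k$. No induction is needed.

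\textbf{Step 3.} Cauchy--Kovalevskaya does not show that a given smooth solution is analytic. What does work is the argument you give next. The prolonged jet vector $Y$ satisfies a closed system $\dd Y=MY$ with $M$ analytic. Restricted to an analytic curve transverse to the leaves of $h_k$, it solves a linear ODE with analytic coefficients, hence is analytic.
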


The classical rank bound will be used; see \cite{Bol1932} and \cite[Proposition~4]{Pirio2006}.
\begin{lemma}\label{lem:bol}
    The rank of a nonsingular planar $d$-web is at most $(d-1)(d-2)/2$.
\end{lemma}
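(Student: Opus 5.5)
Bol's bound is proved by the classical filtration argument on the jets of Abelian relations at a fixed point $p_0\in\Omega$. Since $\Omega$ is connected and every component of a real-analytic Abelian relation is determined by its Taylor expansion, the map sending $(\Phi_i)$ to the collection of jets of $(\Phi_i'\circ h_i)$ at $p_0$ is injective on $\A(\W)$. Additive constants are killed by differentiation, so the quotient in \cref{def:web} is harmless. I will bound this jet space by counting linear conditions order by order.

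First I normalize locally near $p_0$. By the footnote to \cref{def:web}, I may replace each $h_i$ by $\rho_i\circ h_i$. I choose local coordinates so that each $h_i$ is linear to first order at $p_0$ with pairwise distinct slopes; this is possible by the nonsingularity conditions $\dd h_i\neq0$ and $\dd h_i\wedge\dd h_j\neq0$. Let $\ell_i$ denote the linear form $\dd h_i(p_0)$.

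Next, for each $k\ge0$ I define $V_k$ as the space of Abelian relations whose functions $\Phi_i'\circ h_i$ all vanish to order $\ge k$ at $p_0$. Then $V_0=\A(\W)$ and $\bigcap_k V_k=0$. The graded piece $V_k/V_{k+1}$ injects into the space of tuples $(c_i)\in\R^d$ satisfying $\sum_i c_i\,\ell_i^{k}\,\ell_i=0$, that is, $\sum_i c_i\ell_i^{k+1}=0$ in the space of homogeneous polynomials of degree $k+1$ in two variables. This follows by taking the leading homogeneous part of $\sum_i\Phi_i'(h_i)\,\dd h_i=0$: the lowest-order term of $\Phi_i'\circ h_i$ is $c_i\ell_i^{k}$.

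The pairwise independent powers $\ell_1^{k+1},\dots,\ell_d^{k+1}$ span a space of dimension $\min(d,k+2)$, by the Vandermonde determinant. Hence
\[
\dim V_k/V_{k+1}\le \max(0,\,d-k-2).
\]
Summing over $k\ge0$ gives
\[
\dim\A(\W)\le\sum_{k\ge0}\max(0,d-k-2)=(d-2)+(d-3)+\dots+1=\frac{(d-1)(d-2)}{2}.
\]

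The main obstacle is justifying the leading-term computation in the presence of the higher-order parts of the $h_i$. One must check that the contributions of $\dd h_i-\ell_i$ and of the nonlinearity of $h_i$ only affect terms of order $>k$ in the degree-$k$ part of the one-form identity. I would handle this by expanding everything in Taylor series at $p_0$ and comparing homogeneous components of the coefficient of $\dd x$ and $\dd y$ separately. Alternatively, I would simply invoke \cite{Bol1932} or \cite[Proposition~4]{Pirio2006}, as the paper does.
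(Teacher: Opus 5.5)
Your proposal is correct and is essentially the paper's route: the paper gives no argument of its own and only cites Bol and Pirio's Proposition~4, and your jet-filtration count ($V_k/V_{k+1}$ embedding into the kernel of $(c_i)\mapsto\sum_i c_i\ell_i^{k+1}$, which has dimension $\max(0,d-k-2)$) is the standard proof behind that citation. The step you flag as the main obstacle is routine: because $\Phi_i'\circ h_i$ depends on $h_i$ alone and $\dd h_i(p_0)\neq 0$, its leading term is forced to be $c_i\ell_i^k$ with all corrections of order at least $k+1$; the identity $\sum_i c_i\ell_i^k\,\dd\ell_i=\tfrac{1}{k+1}\,\dd\bigl(\sum_i c_i\ell_i^{k+1}\bigr)$ turns the one-form condition into the polynomial one; and since the graded pieces vanish for $k\ge d-2$ you get $V_{d-2}=\bigcap_k V_k=0$, so the sum is legitimate without assuming finite dimension. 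Your preliminary change of coordinates is also unnecessary, since nonsingularity already makes the $\ell_i$ pairwise non-proportional.
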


\subsection{Product reference measures and the density equation}

Throughout the paper, identities between measures are understood up to multiplication by positive constants.
For measures $\mu$ and $\mu'$, the notation $\mu\propto\mu'$ means $\mu=c\mu'$ for some $c>0$.
Accordingly, logarithmic densities are understood up to additive constants.
A probability measure is considered as a representative of an equivalence class of proportional finite measures.

\begin{definition}\label{def:ip}
    For each $T\in\{X,Y,U,V\}$, let $I_T\subset\R$ be an open interval and $\mu_T$ be a $\sigma$-finite Borel measure on $I_T$.
    The map $F \colon I_X\times I_Y \to I_U\times I_V$ is said to preserve the independence of $(\mu_X,\mu_Y,\mu_U,\mu_V)$ if
    \begin{equation}\label{eq:ip-measure}
        F_\#(\mu_X\otimes\mu_Y)\propto\mu_U\otimes\mu_V.
    \end{equation}
\end{definition}
This terminology originates from the fact that if the measures $\mu_X$, $\mu_Y$ are probability measures and $X$ and $Y$ are independent random variables whose distributions are $\mu_X$ and $\mu_Y$ respectively, then $F$ preserves independence of $(\mu_X,\mu_Y,\mu_U,\mu_V)$ if and only if the random variables $U$ and $V$ defined by $(U, V) = F(X, Y)$ are independent and their distributions are $\mu_U$ and $\mu_V$ respectively.

\begin{remark}\label{rem:coordinate-equivalence}
    Let \(a,b,c,d\) be one-variable real-analytic diffeomorphisms between
    intervals, and set $\widetilde F=(c\times d)\circ F\circ(a\times b)^{-1}$.
    Then
    \[
    F_\#(\mu_X\otimes\mu_Y)\propto\mu_U\otimes\mu_V
    \iff
    \widetilde F_\#\big(a_\#\mu_X\otimes b_\#\mu_Y\big)
    \propto \big(c_\#\mu_U\otimes d_\#\mu_V\big).
    \]
    For probability measures, this correspondence has a direct interpretation:
    \[
    (U,V)=F(X,Y) \iff (c(U),d(V))=\widetilde F(a(X),b(Y)).
    \]
    Applying separate functions to independent random variables preserves
    independence, and the invertibility of \(a,b,c,d\) makes this correspondence reversible. 
    Thus the displayed equivalence gives a bijection between the
    product-measure quadruples whose independence is preserved by \(F\) and
    those whose independence is preserved by \(\widetilde F\). 
    It is therefore natural to regard \(F\) and \(\widetilde F\) as equivalent in an independence-characterization problem.
    
    On the other hand, the web accoiated to $\widetilde F$ pulled back to the original domain is $(a(x), b(y), c(u(x, y)), d(v(x, y)))$, which is identified with $\W_F=(x,y,u(x,y),v(x,y))$.
    Therefore, the web is a natural geometric invariant under the prescribed equivalence.
    
    This fact supports the idea of introducing the theory of web geometry into the analysis of the independence-characterization problem, as seen in the main theorem.
\end{remark}

The following theorem translates the independence-characterization problem into inhomogeneous Abelian functional equation\footnote{Abelian functional equation \eqref{eq:Afe} with nontrivial function on the right hand side} \eqref{eq:psi}, which once one solution is fixed, reduces to homogeneous Abelian functional equation \eqref{eq:varphi}.

\begin{theorem}\label{thm:characterization}
    For each $T\in\{X,Y,U,V\}$, let $I_T$ be an open interval of $\R$.
    Let $\mu^0_T$ and $\mu_T$ be $\sigma$-finite Borel measures on $I_T$, both equivalent to Lebesgue measure. 
    Assume that the Radon--Nikodym derivative $\dd\mu_T/\!\dd\mu^0_T$ admits a positive continuous representative, and let $\varphi_T$ denote its logarithm.
    
    Let $F \colon I_X\times I_Y \to I_U\times I_V$, $F(x,y)=(u,v)$ be a real-analytic diffeomorphism, and assume that the planar $4$-web $\W_F=(x,y,u(x,y),v(x,y))$ on $I_X\times I_Y$ is nonsingular.
    
    Suppose that $F$ preserves the independence of $(\mu^0_X,\mu^0_Y,\mu^0_U,\mu^0_V)$.
    Then $F$ preserves the independence of $(\mu_X,\mu_Y,\mu_U,\mu_V)$ if and only if
    \begin{equation}\label{eq:abelian-density}
        (\varphi_X,\varphi_Y,-\varphi_U,-\varphi_V)\in\A(\W_F).
    \end{equation}
\end{theorem}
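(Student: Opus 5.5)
The plan is to compare the two pushforward identities through their densities with respect to the reference product measures. Write $\nu^0=\mu^0_X\otimes\mu^0_Y$ and $\nu=\mu_X\otimes\mu_Y$. By hypothesis $\dd\nu/\dd\nu^0(x,y)=\exp(\varphi_X(x)+\varphi_Y(y))$, so
\[
F_\#\nu=F_\#\bigl(e^{\varphi_X\oplus\varphi_Y}\nu^0\bigr)=\bigl(e^{\varphi_X\oplus\varphi_Y}\circ F^{-1}\bigr)\,F_\#\nu^0 ,
\]
using only that $F$ is a bijection. The assumption $F_\#\nu^0=c\,\mu^0_U\otimes\mu^0_V$ turns this into
\[
F_\#\nu=c\,e^{\varphi_X(x)+\varphi_Y(y)}\big|_{(x,y)=F^{-1}(u,v)}\;\mu^0_U\otimes\mu^0_V .
\]
Likewise $\mu_U\otimes\mu_V=e^{\varphi_U(u)+\varphi_V(v)}\,\mu^0_U\otimes\mu^0_V$. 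Both measures are absolutely continuous with respect to the $\sigma$-finite measure $\mu^0_U\otimes\mu^0_V$, which is equivalent to Lebesgue measure on $I_U\times I_V$. Hence $F_\#\nu\propto\mu_U\otimes\mu_V$ holds if and only if the two densities agree up to a positive constant almost everywhere, that is,
\[
\varphi_X(x)+\varphi_Y(y)-\varphi_U(u(x,y))-\varphi_V(v(x,y))=\text{const}
\]
for Lebesgue-a.e.\ $(x,y)$. Here I use that $F$ is a diffeomorphism, so it maps null sets to null sets in both directions.

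Next I pass from almost everywhere to everywhere. The left-hand side is continuous, because each $\varphi_T$ is continuous and $u,v$ are continuous. A continuous function equal a.e.\ to a constant on the connected open set $I_X\times I_Y$ equals that constant everywhere. Thus $(\varphi_X,\varphi_Y,-\varphi_U,-\varphi_V)$ is a continuous, hence measurable, Abelian relation for $\W_F$ in the sense of \cref{def:web}.

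To land in $\A(\W_F)$, which consists of real-analytic relations modulo constants, I invoke \cref{lem:pirio}. It provides a real-analytic representative, i.e.\ real-analytic $\Phi_i$ agreeing with our components after the identification of \cref{def:web}. Since our components are continuous and the representative agrees with them, they coincide up to additive constants, so no a.e.\ ambiguity remains on the $h_i(\Omega)$. This proves membership in $\A(\W_F)$.

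The converse is immediate. A real-analytic Abelian relation of this form gives the identity everywhere, hence the density equality, hence $F_\#\nu\propto\mu_U\otimes\mu_V$ by reading the computation backwards. The main point needing care is the regularity step, namely interpreting ``$\in\A(\W_F)$'' for merely continuous $\varphi_T$. Nonsingularity of $\W_F$ is exactly what allows \cref{lem:pirio} to apply. I would also check that the representative's components equal the given continuous ones up to constants. This should follow because two continuous functions that agree a.e.\ on an interval are equal.
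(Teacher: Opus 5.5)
Your proposal is correct and follows the paper's proof. The paper derives the a.e.\ identity \eqref{eq:varphi} from Lebesgue log-densities, subtracting \eqref{eq:psi0} from \eqref{eq:psi} so that $\log|J_F|$ cancels, while you read it off directly from $F_\#(g\nu^0)=(g\circ F^{-1})\,F_\#\nu^0$ without introducing the Jacobian; after that, both arguments pass to everywhere by continuity, invoke \cref{lem:pirio}, and reverse the computation for the converse, and your extra check that the real-analytic representative agrees with the continuous $\varphi_T$ up to constants makes explicit a detail the paper leaves implicit.
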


\begin{proof}
    Assume first that $F$ preserves the independence of $(\mu_X,\mu_Y,\mu_U,\mu_V)$.
    For each $T\in\{X,Y,U,V\}$, as $\mu_T$ and $\mu^0_T$ are assumed to be equivalent to Lebesgue measure, their densities are almost surely positive.
    Let $\psi_T$ and $\psi^0_T$ denote the logarithmic density modulo additive constant, which satisfy
    \begin{align}
        \mu_T(\dd t) &\propto \exp\big(\psi_T(t)\big) \dd t, &
        \mu^0_T(\dd t) &\propto \exp\big(\psi^0_T(t)\big) \dd t, &
        \varphi_T(t) &= \psi_T(t) - \psi^0_T(t).
    \end{align}
    Let $J_F$ denote the Jacobian determinant of $F$.
    Since \eqref{eq:ip-measure} holds for $(\mu_X,\mu_Y,\mu_U,\mu_V)$ and $(\mu^0_X,\mu^0_Y,\mu^0_U,\mu^0_V)$, taking logarithms of the change-of-variables formula yields
    \begin{align}
        \psi_X(x) + \psi_Y(y)
        - \psi_U \bigl(u(x,y)\bigr) - \psi_V \bigl(v(x,y)\bigr)
        &= \log |J_F| + (\text{constant}),
        \label{eq:psi}
        \\
        \psi^0_X(x) + \psi^0_Y(y)
        - \psi^0_U \bigl(u(x,y)\bigr) - \psi^0_V \bigl(v(x,y)\bigr)
        &= \log |J_F| + (\text{constant})
        \label{eq:psi0}
    \end{align}
    for Lebesgue almost every $(x,y)\in I_X\times I_Y$.
    Subtracting \eqref{eq:psi0} from \eqref{eq:psi}, one obtains
    \begin{equation}\label{eq:varphi}
        \varphi_X(x)+\varphi_Y(y)
        -\varphi_U\bigl(u(x,y)\bigr)-\varphi_V\bigl(v(x,y)\bigr)
        = (\text{constant})
    \end{equation}
    Lebesgue almost everywhere.
    Since both sides are continuous, \eqref{eq:varphi} holds everywhere.
    Thus $(\varphi_X,\varphi_Y,-\varphi_U,-\varphi_V)$ is a measurable Abelian relation for $\W_F$.
    By \cref{lem:pirio}, it has a real-analytic representative and belongs to $\A(\W_F)$, hence \eqref{eq:abelian-density} holds.
    
    Conversely, \eqref{eq:abelian-density} implies \eqref{eq:varphi} holds everywhere.
    Since $F$ preserves $(\mu^0_X,\mu^0_Y,\mu^0_U,\mu^0_V)$, \eqref{eq:psi0} holds Lebesgue almost everywhere.
    Adding \eqref{eq:varphi} and \eqref{eq:psi0} yields \eqref{eq:psi} Lebesgue almost everywhere, which is again equivalent to \eqref{eq:ip-measure}.
\end{proof}

The following corollary shows that if one has a three-parameter family of measures whose independence is preserved by a two-dimensional diffeomorphism, then the characterization theorem is immediate under the regularity assumptions.

\begin{corollary}\label{thm:characterization-rank3}
    Under the assumptions of \cref{thm:characterization}, assume moreover that three linearly independent Abelian relations 
    \begin{align}
        \big(\Phi_i^X(x), \Phi_i^Y(y), -\Phi_i^U(u), -\Phi_i^V(v)\big) \in \A(\W_F) \quad (i = 1, 2, 3)
    \end{align}
    are given.
    Then $F$ preserves the independence of $(\mu_X, \mu_Y, \mu_U, \mu_V)$ if and only if there exist real parameters $p$, $q$ and $r$ satisfying
    \begin{align}\label{eq:phi-is-superposition}
        \mu_T(\dd t) \propto \exp\big( p \Phi_1^T(t) + q \Phi_2^T(t) + r \Phi_3^T(t) \big) \, \mu^0_T(\dd t)
    \end{align}
    for each $T\in\{X,Y,U,V\}$.
\end{corollary}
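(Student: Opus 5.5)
The plan is to combine \cref{thm:characterization} with the rank bound of \cref{lem:bol}. For the planar $4$-web $\W_F$, \cref{lem:bol} gives $\dim\A(\W_F)\le (4-1)(4-2)/2=3$. The three given Abelian relations are linearly independent, so they form a basis of $\A(\W_F)$. This is the only place the rank bound enters, and it is what turns ``three independent relations'' into ``all relations''.

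For the forward direction, assume $F$ preserves the independence of $(\mu_X,\mu_Y,\mu_U,\mu_V)$. By \cref{thm:characterization}, the quadruple $(\varphi_X,\varphi_Y,-\varphi_U,-\varphi_V)$ lies in $\A(\W_F)$. Since $\A(\W_F)$ is a quotient by componentwise additive constants, expanding in the basis gives reals $p,q,r$ and constants $c_T$ with
\[
\varphi_T = p\Phi_1^T+q\Phi_2^T+r\Phi_3^T + c_T
\]
for each $T\in\{X,Y,U,V\}$. The sign conventions match: the $U$ and $V$ components on both sides carry a minus sign, so the same $p,q,r$ work for all four $T$. Exponentiating and using $\varphi_T=\log(\dd\mu_T/\!\dd\mu^0_T)$ gives \eqref{eq:phi-is-superposition}, with the constants $e^{c_T}$ absorbed into $\propto$.

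For the converse, assume \eqref{eq:phi-is-superposition} holds. Then $\varphi_T$ agrees, up to an additive constant, with $p\Phi_1^T+q\Phi_2^T+r\Phi_3^T$. This holds Lebesgue almost everywhere, and everywhere once we pass to the continuous representative, since the $\Phi_i^T$ are real-analytic. Hence $(\varphi_X,\varphi_Y,-\varphi_U,-\varphi_V)$ is a linear combination of elements of $\A(\W_F)$ modulo constants, so it lies in $\A(\W_F)$. \Cref{thm:characterization} then yields preservation of independence.

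There is one point to check in the converse. The measures defined by \eqref{eq:phi-is-superposition} must satisfy the standing hypotheses of \cref{thm:characterization}: each must be $\sigma$-finite and equivalent to Lebesgue measure, with a positive continuous density ratio. Equivalence and the positive continuous density follow because the exponential of a real-analytic function is positive and continuous. For $\sigma$-finiteness, the density is locally bounded, so it follows from the $\sigma$-finiteness of $\mu^0_T$ (restrict to compact subintervals intersected with sets of finite $\mu^0_T$-measure). The phrase ``under the assumptions of \cref{thm:characterization}'' already presupposes these hypotheses, so this is mainly bookkeeping. The substantive step is the identification of the span of the three relations with all of $\A(\W_F)$ via \cref{lem:bol}.
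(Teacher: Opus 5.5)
Your proposal is correct and matches the paper's proof: \cref{lem:bol} bounds $\dim\A(\W_F)$ by $3$, so the three given relations form a basis, and \cref{thm:characterization} turns membership of $(\varphi_X,\varphi_Y,-\varphi_U,-\varphi_V)$ in $\A(\W_F)$ into the stated density formula in both directions. Your extra bookkeeping (sign matching, absorbing additive constants into $\propto$, and checking the standing hypotheses in the converse) just makes explicit what the paper's two-line proof leaves implicit.
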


\begin{proof}
    \Cref{thm:characterization} identifies the solution space for $(\varphi_X,\varphi_Y,-\varphi_U,-\varphi_V)$ with $\A(\W_F)$, whose dimension is at most $(4-1)(4-2)/2 = 3$, according to \cref{lem:bol}.
    Therefore the exhibition of three linearly independent Abelian relations is sufficient to form a basis, and any element of $\A(\W_F)$ is a linear combination of them.
\end{proof}

\begin{remark}
    The maximal-rank case $\dim\A(\W_F) = 3$ is geometrically distinguished: nonsingular planar $4$-webs of maximal rank are locally algebraizable \cite{BlaschkeBol1938,ChernGriffiths1978,Chern1982}.
    Combined with the idea in \cref{rem:coordinate-equivalence}, this supports the author's conjecture that the structure theory of maximal-rank webs can yield a complete classification, up to coordinatewise reparametrization, of real-analytic diffeomorphisms admitting a three-parameter family of preserved product measures.
\end{remark}

\section{Applications}\label{sec:application}
\subsection{Quadrirational Yang--Baxter maps}

The characterization theorems in \cite{KLPW2025,KoudouWesolowski2025,LetacWesolowski2024} for quadrirational Yang--Baxter maps can be recovered from \cref{thm:characterization-rank3} under additional assumptions.

\begin{definition}
    Fix $\alpha\not=\beta\in\mathbb{CP}^1$.
    Define $\HIp$, $\HIIp$ and $\HIIIA$ by
    \begin{alignat}{3}
        \label{eq:HI}
        \HIp(x, y)
        &=                                                                      
        \Big\lparen
            \frac{y}{\alpha}\,\frac{\beta+\alpha x+\beta y+\alpha \beta x y}{1+x+y+\beta x y}
            &,\quad& \frac{x}{\beta}\,\frac{\alpha+\alpha x+\beta y+\alpha \beta x y}{1+x+y+\alpha x y}
        &&\Big\rparen,
        \\
        \label{eq:HII}
        \HIIp(x, y)
        &=
        \Big\lparen
            \frac{y}{\alpha}\,\frac{\beta+ \alpha x+\beta y}{1+x+y}
            &,\quad& \frac{x}{\beta}\,\frac{\alpha+ \alpha x+\beta y}{1+x+y}
        &&\Big\rparen,
        \\
        \label{eq:HIII}
        \HIIIA(x, y)
        &=
        \Big\lparen
            \frac{y}{\alpha}\,\frac{\alpha x+\beta y}{x+y}
            &,\quad& \frac{x}{\beta}\,\frac{\alpha x+\beta y}{x+y}
        &&\Big\rparen.
    \end{alignat}
\end{definition}

Direct calculation shows that each map is a birational involution of $(\mathbb{CP}^1)^2$.
If $\alpha, \beta \in (0, \infty)$, the displayed formulas have positive numerators and denominators on $(0,\infty)^2$; hence, the maps can be restricted to involutions of $(0,\infty)^2$.

\begin{lemma}\label{lem:jacobian-haar}
    For each $F\in \{\HIp|_{(0,\infty)^2},\HIIp|_{(0,\infty)^2},\HIIIA|_{(0,\infty)^2}\}$, the web $\W_F$ is nonsingular and $F$ preserves $(\dd x/x, \dd y/y, \dd u/u, \dd v/v)$.
\end{lemma}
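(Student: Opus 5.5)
The plan is to verify both claims by direct computation, working in logarithmic coordinates throughout. Put $x=e^s$, $y=e^t$, and consider $\tilde F(s,t)=(\log u,\log v)$. The measure $\dd x/x\otimes\dd y/y$ is Lebesgue measure $\dd s\,\dd t$, and $\dd u/u\otimes\dd v/v$ is $\dd(\log u)\,\dd(\log v)$. So by the change-of-variables formula, the preservation statement (in the sense of \cref{def:ip}) is equivalent to the single identity
\[
\abs{J_F(x,y)}=\frac{u(x,y)\,v(x,y)}{xy}\quad\text{on }(0,\infty)^2,
\]
that is, $\abs{\det D\tilde F}=1$. This uses that $F$ is a diffeomorphism of $(0,\infty)^2$: it is a restricted involution, as noted after the definition. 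The log-Jacobian in question is
\[
\det D\tilde F=\partial_s\log u\,\partial_t\log v-\partial_t\log u\,\partial_s\log v .
\]
Since each of $u,v$ is a monomial times a ratio of polynomials, each logarithmic derivative splits as a sum of simple rational terms.

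For nonsingularity of $\W_F=(x,y,u,v)$, I would list the required conditions. First, $\dd x,\dd y\neq 0$ and $\dd x\wedge\dd y\neq0$ hold trivially. Next, $\dd x\wedge \dd u=u_y\,\dd x\wedge\dd y$ and $\dd y\wedge\dd u=-u_x\,\dd x\wedge\dd y$, and similarly for $v$. Finally, $\dd u\wedge\dd v=J_F\,\dd x\wedge\dd y$. So it suffices to show that $u_x,u_y,v_x,v_y$ and $J_F$ vanish nowhere on $(0,\infty)^2$; the conditions $\dd u\neq0$ and $\dd v\neq0$ then follow. Because $u,v>0$, this is equivalent to nonvanishing of the logarithmic derivatives, and $J_F\neq0$ follows from the identity above.

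For $\HIIIA$, a short computation gives
\[
x\,\partial_x\log u=\frac{(\alpha-\beta)xy}{(\alpha x+\beta y)(x+y)} .
\]
This is nonzero since $\alpha\neq\beta$. Also $y\,\partial_y\log u=1+\beta y/(\alpha x+\beta y)-y/(x+y)$, which is positive because $y/(x+y)<1$. The same two computations apply to $v$, with $\alpha$ and $\beta$ swapped. For $\HIIp$ and $\HIp$ the same splitting works. Each $x$-derivative of $\log u$ should collapse to $(\alpha-\beta)$ times a positive rational function (a difference of $\log$-derivatives of two positive affine or bilinear factors that differ only in a coefficient). Each $y$-derivative should contain the term $1$ from the monomial, dominating a term of the form $y/(1+x+y+\cdots)<1$. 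I would carry this out factor by factor. Here the hypotheses $\alpha,\beta>0$ and $\alpha\ne\beta$ are used; positivity of all factors on $(0,\infty)^2$ was already noted.

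The main obstacle is the identity $\abs{\det D\tilde F}=1$ itself, especially for $\HIp$, whose four factors are bilinear. My first attempt is a direct expansion of $\det D\tilde F$ over the common denominator. For this I would multiply through by the product of the four distinct factors appearing in $u$ and $v$: $\beta+\alpha x+\beta y+\alpha\beta xy$, $1+x+y+\beta xy$, $\alpha+\alpha x+\beta y+\alpha\beta xy$, and $1+x+y+\alpha xy$. I would then check that the resulting polynomial identity holds, which is a finite symbolic verification. The answer should be $\det D\tilde F=-1$, since the map is an orientation-reversing involution; the involution property gives $\det D\tilde F(\tilde F(p))\det D\tilde F(p)=1$ as a consistency check. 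As a shortcut, I would note that $\HIIp$ and $\HIIIA$ arise from $\HIp$ by degeneration limits, so the computation for $\HIp$ can be organized to specialize. Nonetheless, I would verify the three cases separately; the $\HIIIA$ case is a few lines by hand.
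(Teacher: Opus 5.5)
Your proposal is correct and follows essentially the same route as the paper: nonsingularity is reduced, via $\dd x\wedge\dd u=u_y\,\dd x\wedge\dd y$ and its analogues, to the nonvanishing of $u_x,u_y,v_x,v_y$ and $J_F$. Preservation of $(\dd x/x,\dd y/y,\dd u/u,\dd v/v)$ then follows from the direct computation $J_F=-uv/(xy)$, which is your $\det D\tilde F=-1$ in logarithmic coordinates. Your explicit sign arguments for the logarithmic partial derivatives (the factor $\alpha-\beta$ in $u_x,v_y$, and the positivity of $y\partial_y\log u$ and $x\partial_x\log v$) simply fill in details the paper leaves to ``direct calculation''.
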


\begin{proof}
    Let $J_F$ denote the Jacobian determinant of $F$.
    Since $\dd x\wedge\dd y\neq 0$ everywhere,
    \begin{align}
        \dd x\wedge \dd u &= u_y\,\dd x\wedge\dd y, &
        \dd y\wedge \dd u &= -u_x\,\dd x\wedge\dd y, \\
        \dd x\wedge \dd v &= v_y\,\dd x\wedge\dd y, &
        \dd y\wedge \dd v &= -v_x\,\dd x\wedge\dd y, &
        \dd u\wedge \dd v &= J_F\,\dd x\wedge\dd y
    \end{align}
    and none of $u_x$, $u_y$, $v_x$, $v_y$ or $J_F$ vanishes on $(0, \infty)^2$, the web is nonsingular.
    Direct calculation shows $J_F=-uv/(xy)$, hence $F$ preserves $(\dd x/x, \dd y/y, \dd u/u, \dd v/v)$.
\end{proof}

\begin{corollary}\label{cor:HI-densities}
Let $(\mu_X,\mu_Y,\mu_U,\mu_V)$ be probability measures on $(0, \infty)$ having positive continuous density.
Then $\HIp|_{(0,\infty)^2}$ preserves their independence if and only if there exist $p,q,r\in\R$ with $\max\{p, 0\} < \min\{q, r\}$ such that
\begin{alignat*}{3}
    \mu_X(\dd x)
        &\propto (1 + \alpha x)^{p}\Big(\frac{1}{1+x}\Big)^q\Big(\frac{\alpha x}{1+\alpha x}\Big)^r\frac{\dd x}{x} &
        &\propto x^{r-1}(1+x)^{-q}(1+\alpha x)^{p-r}\,\dd x,\\
    \mu_Y(\dd y)
        &\propto (1 + \beta y)^{p}\Big(\frac{\beta y}{1+\beta y}\Big)^q\Big(\frac{1}{1+y}\Big)^r\frac{\dd y}{y} &
        &\propto y^{q-1}(1+y)^{-r}(1+\beta y)^{p-q}\,\dd y,\\
    \mu_U(\dd u)
        &\propto (1 + \alpha u)^{p}\Big(\frac{\alpha u}{1+\alpha u}\Big)^q\Big(\frac{1}{1+u}\Big)^r\frac{\dd u}{u} &
        &\propto u^{q-1}(1+u)^{-r}(1+\alpha u)^{p-q}\,\dd u,\\
    \mu_V(\dd v)
        &\propto (1 + \beta v)^{p}\Big(\frac{1}{1+v}\Big)^q\Big(\frac{\beta v}{1+\beta v}\Big)^r\frac{\dd v}{v} &
        &\propto v^{r-1}(1+v)^{-q}(1+\beta v)^{p-r}\,\dd v.
\end{alignat*}
\end{corollary}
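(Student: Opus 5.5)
The plan is to apply \cref{thm:characterization-rank3} with the reference quadruple $(\mu^0_X,\mu^0_Y,\mu^0_U,\mu^0_V)=(\dd x/x,\dd y/y,\dd u/u,\dd v/v)$. By \cref{lem:jacobian-haar}, $F=\HIp|_{(0,\infty)^2}$ preserves the independence of this quadruple and $\W_F$ is nonsingular. Probability measures with positive continuous densities are equivalent to Lebesgue measure on $(0,\infty)$, and their Radon--Nikodym derivatives with respect to $\dd t/t$ are positive and continuous, so all hypotheses of \cref{thm:characterization} hold. Throughout, $\alpha,\beta>0$ are implicitly assumed, as needed for the restriction to $(0,\infty)^2$.

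It then remains to exhibit three linearly independent Abelian relations whose superposition reproduces the displayed densities. Reading them off from the first column, I propose:
\begin{align}
\Phi_1&=\bigl(\log(1+\alpha x),\ \log(1+\beta y),\ \log(1+\alpha u),\ \log(1+\beta v)\bigr),\\
\Phi_2&=\Bigl(-\log(1+x),\ \log\tfrac{\beta y}{1+\beta y},\ \log\tfrac{\alpha u}{1+\alpha u},\ -\log(1+v)\Bigr),\\
\Phi_3&=\Bigl(\log\tfrac{\alpha x}{1+\alpha x},\ -\log(1+y),\ -\log(1+u),\ \log\tfrac{\beta v}{1+\beta v}\Bigr).
\end{align}
Here the $U,V$ entries are $\Phi_i^U,\Phi_i^V$, entering the relation with a minus sign. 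Each claim is therefore a multiplicative identity. For example, $\Phi_1$ amounts to
\[
(1+\alpha x)(1+\beta y)\propto(1+\alpha u)(1+\beta v)
\]
with a constant independent of $(x,y)$, and $\Phi_2$, $\Phi_3$ give analogous identities. To verify them, I would compute $1+\alpha u$, $1+u$, $\alpha u$ and the corresponding expressions in $v$ from \eqref{eq:HI}. I expect these to factor as products of the linear-in-each-variable polynomials $1+x$, $1+\alpha x$, $1+y$, $1+\beta y$, divided by the denominators $1+x+y+\beta xy$ and $1+x+y+\alpha xy$, which then cancel between the two sides.

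This factorization bookkeeping is the main obstacle: the guessed relations must come out exactly right, including the constants. If some relation fails as written, I would instead derive the correct triple from these factorizations, since the Abelian relations of this web should be logarithms of exactly such factors. Linear independence modulo constants follows from the $X$-components alone. Near $x=0$, only $\Phi_3^X$ has a logarithmic singularity $\log x$. The remaining components $\log(1+\alpha x)$ and $-\log(1+x)$ are independent modulo constants when $\alpha\neq1$. In the case $\alpha=1$, $\beta\neq\alpha$, and the $Y$-components $\log(1+\beta y)$ and $\log\frac{\beta y}{1+\beta y}$ separate the relations instead.

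\Cref{thm:characterization-rank3} then gives that $F$ preserves the independence of the quadruple if and only if each $\mu_T\propto\exp(p\Phi_1^T+q\Phi_2^T+r\Phi_3^T)\,\dd t/t$, which is precisely the first column of the statement; the second column is simple algebra. Finally, the requirement that these be finite, hence normalizable to probability measures, yields the constraint. For instance, $x^{r-1}(1+x)^{-q}(1+\alpha x)^{p-r}$ is integrable at $0$ iff $r>0$ and at $\infty$ iff $p-q-1<-1$, i.e.\ $q>p$. Doing the same for $Y$ (giving $q>0$, $r>p$), $U$ (giving $q>0$, $r>p$) and $V$ (giving $r>0$, $q>p$), the conjunction is exactly $\max\{p,0\}<\min\{q,r\}$.
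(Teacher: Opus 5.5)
Your proposal is correct and follows the paper's proof: the same reference measures $\dd t/t$ from \cref{lem:jacobian-haar}, the same three Abelian relations, and an application of \cref{thm:characterization-rank3} followed by the integrability check, where you also spell out linear independence and the finiteness conditions in more detail than the paper does. The factorizations you only anticipated do hold, e.g.\ $1+\alpha u=(1+\beta y)(1+x+y+\alpha xy)/(1+x+y+\beta xy)$ and $1+u=(1+y)(\alpha+\alpha x+\beta y+\alpha\beta xy)/\bigl(\alpha(1+x+y+\beta xy)\bigr)$, together with their analogues under $x\leftrightarrow y$, $\alpha\leftrightarrow\beta$, $u\leftrightarrow v$, so all three relations hold exactly with constant zero.
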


\begin{proof}
    \Cref{lem:jacobian-haar} allows one to take $\mu^0_T(\dd t) = \dd t/t$ for each $T\in\{X,Y,U,V\}$.
    The following three quadruples are linearly independent elements of $\A(\W_{\HIp|_{(0,\infty)^2}})$:
    \begin{alignat*}{7}
        &\big\lparen&
            \log(1 + \alpha x)&,\,\,&
            \log(1 + \beta y)&,\,\,&
            - \log(1 + \alpha u)&,\,\,&
            - \log(1 + \beta v)&
        &\big\rparen,
        \\
        &\Big\lparen&
            \log\frac{1}{1+x}&,\,\,&
            \log\frac{\beta y}{1+\beta y}&,\,\,&
            - \log\frac{\alpha u}{1+\alpha u}&,\,\,&
            - \log\frac{1}{1+v}&
        &\Big\rparen,
        \\
        &\Big\lparen&
            \log\frac{\alpha x}{1+\alpha x}&,\,\,&
            \log\frac{1}{1+y}&,\,\,&
            - \log\frac{1}{1+u}&,\,\,&
            - \log\frac{\beta v}{1+\beta v}&
        &\Big\rparen.
    \end{alignat*}
    Then \cref{thm:characterization-rank3} applies, and the measures are finite if and only if $\max\{p,0\}<\min\{q,r\}$.
\end{proof}

\begin{corollary}\label{cor:HII-densities}
Let $(\mu_X,\mu_Y,\mu_U,\mu_V)$ be probability measures on $(0, \infty)$ having positive continuous density.
Then $\HIIp|_{(0,\infty)^2}$ preserves their independence if and only if there exist $p,q,r\in\R$ with $p,q,r>0$ such that
\begin{alignat*}{2}
    \mu_X(\dd x)
        &\propto (e^{-\alpha x})^p x^q (1+x)^{-r} \dd x/x
        & &\propto x^{q-1}(1+x)^{-r}e^{-p\alpha x}\,\dd x,\\
    \mu_Y(\dd y)
        &\propto (e^{-\beta y})^p (1+y)^{-q} y^r \dd y/y
        & &\propto y^{r-1}(1+y)^{-q}e^{-p\beta y}\,\dd y,\\
    \mu_U(\dd u)
        &\propto (e^{-\alpha u})^p (1+u)^{-q} u^r \dd u/u
        & &\propto u^{r-1}(1+u)^{-q}e^{-p\alpha u}\,\dd u,\\
    \mu_V(\dd v)
        &\propto (e^{-\beta v})^p v^q (1+v)^{-r} \dd v/v
        & &\propto v^{q-1}(1+v)^{-r}e^{-p\beta v}\,\dd v.
\end{alignat*}
\end{corollary}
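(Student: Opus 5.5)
The proof follows the same pattern as \cref{cor:HI-densities}. Assume $\alpha,\beta\in(0,\infty)$, so that \cref{lem:jacobian-haar} applies. Then $\W_{\HIIp|_{(0,\infty)^2}}$ is nonsingular, and we may take $\mu^0_T(\dd t)=\dd t/t$ for each $T\in\{X,Y,U,V\}$. By \cref{thm:characterization-rank3}, the corollary reduces to two tasks: exhibiting three linearly independent elements of $\A(\W_{\HIIp|_{(0,\infty)^2}})$ that produce the stated exponents, and then sorting out which parameters give finite measures.

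The candidate Abelian relations, read off from the claimed densities, are
\begin{align}
&\big(-\alpha x,\ -\beta y,\ \alpha u,\ \beta v\big),\\
&\big(\log x,\ -\log(1+y),\ \log(1+u),\ -\log v\big),\\
&\big(-\log(1+x),\ \log y,\ -\log u,\ \log(1+v)\big).
\end{align}
Each is verified by a direct identity.

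For the first, expanding the numerators of $u$ and $v$ from \eqref{eq:HII} gives $\alpha u+\beta v=\alpha x+\beta y$.

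For the second, the key factorisation is
\[
\alpha+\alpha x+\alpha y+\beta y+\alpha xy+\beta y^2=(1+y)(\alpha+\alpha x+\beta y).
\]
This yields
\[
1+u=\frac{(1+y)(\alpha+\alpha x+\beta y)}{\alpha(1+x+y)}.
\]
Comparing with $v$ from \eqref{eq:HII}, we get $(1+u)/v=\beta(1+y)/(\alpha x)$. Hence
\[
\log x-\log(1+y)+\log(1+u)-\log v=\log(\beta/\alpha)
\]
is constant.

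The third relation follows symmetrically, by the analogous factorisation $1+v=(1+x)(\beta+\alpha x+\beta y)/(\beta(1+x+y))$ compared with $u$.

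Linear independence modulo constants is immediate from the $X$-components alone: $-\alpha x$, $\log x$ and $-\log(1+x)$ are linearly independent modulo constants on $(0,\infty)$.

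By \cref{thm:characterization-rank3}, the quadruples whose independence is preserved are exactly those of the form \eqref{eq:phi-is-superposition} with coefficients $p,q,r$. Multiplying by $\dd t/t$ gives the displayed densities, for example
\[
x^{q-1}(1+x)^{-r}e^{-p\alpha x}\,\dd x.
\]

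It remains to determine when all four measures are finite. Integrability near $0$ forces $q>0$ from $\mu_X$ and $\mu_V$, and $r>0$ from $\mu_Y$ and $\mu_U$. At infinity there are three cases:
\begin{itemize}
\item If $p<0$, the exponential factor makes every measure infinite.
\item If $p=0$, $\mu_X$ requires $r>q$ while $\mu_Y$ requires $q>r$, which is a contradiction.
\item If $p>0$, all four are finite (using $\alpha,\beta>0$).
\end{itemize}
Thus finiteness holds exactly when $p,q,r>0$. The only step needing care is this finiteness bookkeeping, in particular excluding the borderline case $p=0$ by playing $\mu_X$ against $\mu_Y$. The algebraic verification of the second relation via the factorisation of $1+u$ is the other point where an error could slip in.
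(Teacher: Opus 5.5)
Your proposal is correct and takes the paper's approach: reference measures $\dd t/t$ from \cref{lem:jacobian-haar}, the same three Abelian relations, \cref{thm:characterization-rank3}, and the finiteness condition $p,q,r>0$. The difference is that you write out what the paper leaves to direct calculation, namely the identity $\alpha u+\beta v=\alpha x+\beta y$, the factorizations of $1+u$ and $1+v$, linear independence via the $X$-components, and ruling out $p=0$ by comparing $\mu_X$ with $\mu_Y$.
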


\begin{proof}
    \Cref{lem:jacobian-haar} allows one to take $\mu^0_T(\dd t) = \dd t/t$ for each $T\in\{X,Y,U,V\}$.
    The following three quadruples are linearly independent elements of $\A(\W_{\HIIp|_{(0,\infty)^2}})$:
    \begin{alignat*}{7}
        &\lparen&
            -\alpha x&,\,\,&
            - \beta y&,\,\,&
            \alpha u&,\,\,&
            \beta v
        &&\rparen,
        \\
        &\lparen&
            \log x&,\,\,&
            - \log(1+y)&,\,\,&
            \log(1+u)&,\,\,&
            - \log v
        &&\rparen,
        \\
        &\lparen&
            - \log(1+x)&,\,\,&
            \log y&,\,\,&
            - \log u&,\,\,&
            \log(1+v)
        &&\rparen.
    \end{alignat*}
    Then \cref{thm:characterization-rank3} applies, and the measures are finite if and only if $p,q,r>0$.
\end{proof}

\begin{corollary}\label{cor:HIII-densities}
Let $(\mu_X,\mu_Y,\mu_U,\mu_V)$ be probability measures on $(0, \infty)$ having positive continuous density.
Then $\HIIIA|_{(0,\infty)^2}$ preserves their independence if and only if there exist $p,q,r\in\R$ with $p,q>0$ such that
\begin{alignat*}{2}
    \mu_X(\dd x)
        &\propto (e^{-\alpha x})^p (e^{-1/x})^q x^r \dd x/x
        & &\propto x^{r-1}e^{-p\alpha x-q/x}\,\dd x,\\
    \mu_Y(\dd y)
        &\propto (e^{-\beta y})^p (e^{-1/y})^q y^{-r} \dd y/y
        & &\propto y^{-r-1}e^{-p\beta y-q/y}\,\dd y,\\
    \mu_U(\dd u)
        &\propto (e^{-\alpha u})^p (e^{-1/u})^q u^{-r} \dd u/u
        & &\propto u^{-r-1}e^{-p\alpha u-q/u}\,\dd u,\\
    \mu_V(\dd v)
        &\propto (e^{-\beta v})^p (e^{-1/v})^q v^r \dd v/v
        & &\propto v^{r-1}e^{-p\beta v-q/v}\,\dd v.
\end{alignat*}
\end{corollary}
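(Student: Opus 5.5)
The plan mirrors the proofs of \cref{cor:HI-densities,cor:HII-densities}. By \cref{lem:jacobian-haar}, $\HIIIA|_{(0,\infty)^2}$ has a nonsingular web and preserves $(\dd x/x,\dd y/y,\dd u/u,\dd v/v)$. We may therefore take $\mu^0_T(\dd t)=\dd t/t$ in \cref{thm:characterization}. The main work is to exhibit three linearly independent elements of $\A(\W_{\HIIIA|_{(0,\infty)^2}})$ read off from the displayed densities. The candidates are
\begin{align}
 &\bigl(-\alpha x,\,-\beta y,\,\alpha u,\,\beta v\bigr),\\
 &\bigl(-1/x,\,-1/y,\,1/u,\,1/v\bigr),\\
 &\bigl(\log x,\,-\log y,\,\log u,\,-\log v\bigr).
\end{align}
Under the sign convention $(\Phi^X,\Phi^Y,-\Phi^U,-\Phi^V)$, these give $\Phi^U=-\alpha u$, $\Phi^U=-1/u$ and $\Phi^U=-\log u$, matching the exponents in the statement.

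Each candidate requires its own verification. Write $s=(\alpha x+\beta y)/(x+y)$, so that $u=ys/\alpha$ and $v=xs/\beta$.
\begin{itemize}
\item[(i)] For the first, $\alpha u+\beta v=(x+y)s=\alpha x+\beta y$, so the sum is $0$.
\item[(ii)] For the second, compute $1/u+1/v=\alpha/(ys)+\beta/(xs)=(\alpha x+\beta y)/(xys)=(x+y)/(xy)=1/x+1/y$.
\item[(iii)] For the third, $u/v=\beta y/(\alpha x)$, so $\log x-\log y+\log u-\log v=\log(\beta/\alpha)$, a constant.
\end{itemize}
Linear independence is immediate from the $X$-components $-\alpha x$, $-1/x$ and $\log x$, which are linearly independent modulo constants for $\alpha>0$.

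\Cref{thm:characterization-rank3} then says that preservation holds if and only if each $\mu_T$ has the form $\exp(p\Phi_1^T+q\Phi_2^T+r\Phi_3^T)\,\dd t/t$ for some real $p,q,r$. These are exactly the displayed densities.

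It remains to impose finiteness, which is where care is needed. Consider $x^{r-1}e^{-p\alpha x-q/x}$ on $(0,\infty)$.
\begin{itemize}
\item[(a)] Near $\infty$, integrability requires $p>0$, or else $p=0$ together with $r<0$.
\item[(b)] Near $0$, integrability requires $q>0$, or else $q=0$ together with $r>0$.
\end{itemize}
For $\mu_X$ alone, degenerate gamma or inverse gamma cases could therefore survive. The resolution is to use that all four measures share the same parameters. The densities for $\mu_X,\mu_V$ carry $x^{r-1}$, while those for $\mu_Y,\mu_U$ carry $y^{-r-1}$.
\begin{itemize}
\item If $p=0$, then (a) forces $r<0$ from $\mu_X$ and $-r<0$ from $\mu_Y$, which is a contradiction.
\item If $q=0$, then (b) likewise forces $r>0$ and $-r>0$, which is again a contradiction.
\end{itemize}
Hence $p,q>0$ with $r$ arbitrary. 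Conversely, these conditions make all four GIG-type densities integrable, so they can be normalized to probability measures. This completes the equivalence.
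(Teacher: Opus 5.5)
Your proposal is correct and follows the paper's proof: you use the same reference measures $\dd t/t$ from \cref{lem:jacobian-haar}, the same three Abelian relations, and the same appeal to \cref{thm:characterization-rank3}. Your explicit checks of the three relations and your finiteness analysis fill in steps the paper only asserts; in particular, you rule out the boundary cases $p=0$ or $q=0$ by using the opposite-sign exponents $x^{r-1}$ and $y^{-r-1}$.
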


\begin{proof}
    By \cref{lem:jacobian-haar}, take $\mu^0_T(\dd t)=\dd t/t$ for each $T\in\{X,Y,U,V\}$.
    The following three quadruples are linearly independent elements of $\A(\W_{\HIIIA|_{(0,\infty)^2}})$:
    \begin{align}
        (-\alpha x,-\beta y,\alpha u,\beta v),\quad
        (-1/x,-1/y,1/u,1/v),\quad
        (\log x,-\log y,\log u,-\log v).
    \end{align}
    Then \cref{thm:characterization-rank3} applies, and the measures are finite if and only if $p,q>0$.
\end{proof}

\subsection{Further examples}

The classical results mentioned in \Cref{sec:introduction} can also be recovered from \cref{thm:characterization} under additional assumptions.

\begin{remark}\label{rem:classical-properties}
For the Kac--Bernstein theorem, consider $F(x,y)=(x+y,x-y)$ on $\R^2$ and let $\mu^0_T(\dd t)=\dd t$.
Three Abelian relations are
\begin{align}
    (x,y,-u,0),\quad
    (x,-y,0,-v),\quad
    (2x^2,2y^2,-u^2,-v^2).
\end{align}
For the Lukacs theorem, consider $F(x,y)=(x+y,x/y)$ on $(0, \infty)^2$ and let $\mu^0_T(\dd t)=\dd t/t$.
Three Abelian relations are
\begin{align}
    (x,y,-u,0), \quad
    (-\log x,\log y,0,\log v),\quad
    (0,\log y,-\log u,\log(1+v)).
\end{align}
The Matsumoto--Yor property is a special case of the $\HIIIA$; see \cite[Remark~3.4]{SasadaUozumi2024}.
\end{remark}

The converse direction of \cref{thm:characterization} can be used to construct further examples of independence-preserving maps, possibly with fewer than three parameters.
The following example provides another class of independence-preserving maps, generalizing transformations considered by Koudou and Vallois \cite{KoudouVallois2012}\footnote{The case $\phi=\psi$ with deceasing bijection $\phi$ on $(0,\infty)$ with some regularities are treated.}.

\begin{example}\label{ex:rank-two-class}
Let $\phi$ and $\psi$ be smooth bijections on the relevant intervals such that
\begin{equation}\label{eq:phi-psi-map}
    F(x,y)=(u,v)=\bigl(\phi(x+y),\,\psi(x)-\phi(x+y)\bigr)
\end{equation}
defines a real-analytic diffeomorphism between product domains and its associated web $\W_F$ is nonsingular.
\footnote{
This condition is indeed satisfied for example by $\phi(t) = e^t$, $\psi(t) = e^{t+1}$ and $I_X = \R$, $I_Y = (-\infty, 1)$ and $I_U = I_V = (0, \infty)$. 
}
Then
\[
(x,y,-\phi^{-1}(u),0),\qquad
(\psi(x),0,-u,-v)
\]
are in $\A(\W_F)$.
Since $J_F(x,y)=-\phi'(x+y)\psi'(x)$, one may take $\mu^0_X(\dd x) = \abs{\psi'(x)}\dd x$, $\mu^0_Y(\dd y) = \dd y$, $\mu^0_U(\dd u) = \dd u/\abs{\phi'(\phi^{-1}(u))}$ and $\mu^0_V(\dd v) = \dd v$ as references.
Therefore the converse direction of \cref{thm:characterization} shows that the following measures satisfy \eqref{eq:ip-measure}:
\begin{align}
    \mu_X(\dd x)&\propto \exp\bigl(-px-q\psi(x)\bigr)\abs{\psi'(x)}\dd x,
    &\mu_Y(\dd y)&\propto \exp(-py)\dd y,\\
    \mu_U(\dd u)&\propto
    \exp\bigl(-p\phi^{-1}(u)-qu\bigr)
    \frac{\dd u}{\abs{\phi'(\phi^{-1}(u))}},
    &\mu_V(\dd v)&\propto \exp(-qv)\dd v,
\end{align}
for $p,q\in\R$.
Thus $F$ preserves at least a two-parameter family of measures.
If the rank of the web $\W_F$ is not three, then these form the complete family of measures satisfying assumptions of \cref{thm:characterization} whose independence is preserved by $F$.
The author conjectures that if the rank of the web $\W_F$ is three, then $F$ is equivalent, in the sense of \cref{rem:coordinate-equivalence}, to one of the maps in the list of \cite[Theorem 2.2]{KoudouVallois2012}.
\end{example}

\begin{remark}
    Since webs and their Abelian functional equations can be considered locally, one may change the domain of one independence-preserving map to obtain another independence-preserving map.
    The characterized measures share the same density functions although the domains are different.
    See the example below.
\end{remark}

\begin{example}\label{ex:HI-on-other-domains}
    If $\alpha$ and $\beta$ are different and greater than $1$, then direct calculation shows that the formula of the map $\HIp$ also defines an involutive diffeomorphism on $\Omega\in\{(-1/\alpha, 0) \times (-1/\beta, 0), (-1, -1/\alpha) \times (-1, -1/\beta), (-\infty, -1)^2\}$.
    Similar arguments as in the proofs of \cref{lem:jacobian-haar,cor:HI-densities} with $\mu^0_T(\dd t) = \dd t/|t|$ prove that independence preservation by $\HIp|_\Omega$ characterizes the following probability measures considered on the corresponding intervals for each map:
    \begin{alignat*}{3}
        \mu_X(\dd x)
            &\propto |1 + \alpha x|^{p}\Big|\frac{1}{1+x}\Big|^q\Big|\frac{\alpha x}{1+\alpha x}\Big|^r\frac{\dd x}{|x|} &
            &\propto |x|^{r-1}|1+x|^{-q}|1+\alpha x|^{p-r}\,\dd x,\\
        \mu_Y(\dd y)
            &\propto |1 + \beta y|^{p}\Big|\frac{\beta y}{1+\beta y}\Big|^q\Big|\frac{1}{1+y}\Big|^r\frac{\dd y}{|y|} &
            &\propto |y|^{q-1}|1+y|^{-r}|1+\beta y|^{p-q}\,\dd y,\\
        \mu_U(\dd u)
            &\propto |1 + \alpha u|^{p}\Big|\frac{\alpha u}{1+\alpha u}\Big|^q\Big|\frac{1}{1+u}\Big|^r\frac{\dd u}{|u|} &
            &\propto |u|^{q-1}|1+u|^{-r}|1+\alpha u|^{p-q}\,\dd u,\\
        \mu_V(\dd v)
            &\propto |1 + \beta v|^{p}\Big|\frac{1}{1+v}\Big|^q\Big|\frac{\beta v}{1+\beta v}\Big|^r\frac{\dd v}{|v|} &
            &\propto |v|^{r-1}|1+v|^{-q}|1+\beta v|^{p-r}\,\dd v.
    \end{alignat*}
    Note that each term inside absolute values has constant sign on each domain.
    The parameter conditions for finiteness depend on the domain $\Omega$ as follows:
    \begin{alignat}{7}
        \Omega & {}={} & (-1/\alpha, 0) &\times (-1/\beta, 0)& &\implies& 0<q<p+1,\, 0<r<p+1, \\
        \Omega & {}={} & (-1, -1/\alpha) &\times (-1, -1/\beta)& &\implies& \max\{q, r\}<\min\{1, p+1\}, \\
        \Omega & {}={} & (-\infty, -1) &\times(-\infty, -1)& &\implies& p<q<1,\, p<r<1. \\
    \end{alignat} 
\end{example}

\section*{Acknowledgments}

The author is grateful to Makiko Sasada, Jacek Weso\l{}owski, Hokuto Konno, Ralph Willox and Luc Pirio for valuable discussions, comments and suggestions that improved the presentation of the paper.

\bibliographystyle{amsalpha}
\bibliography{refs}

\end{document}